\colorlet{mylinkcolor}{green!50!black}
\colorlet{mycitecolor}{red!50!black}
\colorlet{myviewcolor}{black!50}
\SetMathAlphabet{\mathrm}{normal}{OT1}{txr}{m}{n}  
\newtheorem{algo}{Algorithm}
\newtheorem{asmp}{Assumption}
\newtheorem{rem}{Remark}
\newtheorem{prop}{Proposition}
\newtheorem{thm}{Theorem}
\def\cA{{\mathcal A}}
\def\cC{{\mathcal C}}
\def\cF{{\mathcal F}}
\def\cH{{\mathcal H}}
\def\cI{{\mathcal I}}
\def\cJ{{\mathcal J}}
\def\cK{{\mathcal K}}
\def\cL{{\mathcal L}}
\def\cN{{\mathcal N}}
\def\cQ{{\mathcal Q}}
\def\cS{{\mathcal S}}
\def\cT{{\mathcal T}}
\def\cX{{\mathcal X}}
\def\cY{{\mathcal Y}}
\def\bR{{\mathbb R}}
\DeclareMathOperator*{\argmin}{argmin}
\DeclareMathOperator*{\ri}{ri} \DeclareMathOperator*{\dom}{dom}
\def\ds{\displaystyle}
\def\ds{\displaystyle}
\def\[{\begin{equation}}
\def\]{\end{equation}}
\def\baselinestretch{1.42}
\begin{document}

\title{A Three-Operator Splitting Perspective of a Three-Block ADMM for Convex Quadratic Semidefinite Programming and Extensions
}
\titlerunning{A Three-Operator Splitting Perspective of a Three-Block ADMM}
\author{X. K. Chang  \and L. Chen  \and S. Y. Liu}
\institute{
Xiaokai Chang
\at  School of Mathematics and Statistics, Xidian University, Xi’an 710071, P.R. China.\\
\email{xkchang@lut.cn}
\and
Liang Chen
\at College of Mathematics and Econometrics, Hunan University, Changsha 4100082, P.R. China
\\
\email{chl@hnu.edu.cn}
\and
Sanyang Liu
\at
School of Mathematics and Statistics, Xidian University, Xi'an 710071, P.R. China.
\\
\email{liusanyang@126.com}
}
\date{2017/10/23, revised on 2018/07/05}

\maketitle
\begin{abstract}
In recent years, several convergent multi-block variants of the alternating direction method of multipliers (ADMM) have been proposed for solving the convex quadratic semidefinite programming via its dual, which is naturally a $3$-block separable convex optimization problem with one coupled linear equality constraint. Among of these ADMM-type algorithms, the modified $3$-block ADMM in [Chang et al., Neurocomput. 214: 575--586 (2016)] bears a peculiar feature that the augmented Lagrangian function is not necessarily to be minimized with respect to the block-variable corresponding to the quadratic term of the objective function. In this paper, we lay the theoretical foundation of this phenomena by interpreting this modified $3$-block ADMM as a realization of a $3$-operator splitting framework.
Based on this perspective, we are able to extend this modified $3$-block ADMM to a generalized $3$-block ADMM, which not only applies to the more general convex composite quadratic programming setting but also admits the potential of achieving even a better numerical performance.

\keywords{Convex composite quadratic programming\and Convex quadratic semidefinite programming\quad Multi-Block\and Alternating direction method of multipliers (ADMM)\and Operator splitting\and Generalized ADMM}
 \subclass{90C25\and  90C22 \and  65K05 \and 47H05  }
\end{abstract}

\section{Introduction}
\label{sec_Introduction}
The convex quadratic semidefinite programming (CQSDP) has found many concrete applications in economics and engineering, and captures many well-studied problems, including the linearly constrained semidefinite least squares problem, the nearest Euclidean distance matrix (EDM) problem and the nearest correlation matrix problem.

To solve CQSDP problems, several algorithms have been proposed from different angles and here we only mention a few typical and relevant approaches.
Based on certain perturbations of the Kuhn-Krash-Tucker (KKT) system of the CQSDP problem and its dual, Toh and his coauthors have proposed several inexact interior-point methods for solving them \cite{A polynomial-time,Inexact primal dual,An inexact primal dual path}.
By using the generalized Newton method together with the conjugate gradient method, many efficient methods were proposed for solving CQSDP problems \cite{semis-mooth Newton-CG,augmented Lagrangian dual-H}.
For an important class of the CQSDP problem, i.e. the nearest correlation matrix problem, the quadratic convergence of the Newton-CG method has been obtained by Qi and Sun \cite{semis-mooth Newton-CG}.
For general CQSDP problems, the most recently developed solver QSDPNAL in Li, Sun and Toh \cite{qsdpnal} has demonstrated that a two-phase augmented Lagrangian method, which properly combines both first-order and second-order algorithms, possesses a pretty promising numerical performance.
A closer look of this solver shows that the inexact multi-block proximal ADMM studied in \cite{chenine},  in which the inexact block symmetric Gauss Seidel iteration technique elaborated by Li, Sun and Toh \cite{lisgs} was tightly incorporated, has been utilized to generate an approximate solution with a low to medium accuracy to warm-start an augmented Lagrangian method, whose subproblems are solved by a semismooth Newton method.

Just as in QSDPNAL, first-order ADMM-type algorithms are of their own importance for solving CQSDP problems. In fact, many extensions and modifications of the classic ADMM of Glowinski and Marroco \cite{GLOWINSKI75} and Gabay and Mercier \cite{GABAY76} have been considered in recent years for solving the CQSDP problem via its dual,  which is innately a $3$-block separable convex optimization problem with one coupled linear constraint.
Indeed, the most intuitive idea is to directly extend the classic ADMM to $3$-block problems and the corresponding numerical performance is pretty good for many instances of problems \cite{chang1,chenine,Schur Complement,admm3c}.
However, the direct extension of ADMM (ADMMe) to problems with more than two blocks of variables is not guaranteed to be convergent (c.f. \cite{direct extension} for a concrete example).
Therefore, attentions have been paid to the design of multi-block ADMM-type algorithms and, fortunately, several algorithms of this type have been successfully applied to  solving the CQSDP problem via its dual with a satisfactory numerical efficiency and a theoretical guarantee of convergence \cite{modified admm,chenine,Schur Complement}.

Among of these  ADMM-type algorithms for CQSDP problems, the modified $3$-block ADMM by Chang et al. \cite{modified admm} has a distinct feature that one of the subproblems, i.e., the minimization of the augmented Lagrangian function with respect to a certain block of variables, can always be skipped.
This saves both the computational cost and the memory for variable storage, and, more importantly, the convergence is guaranteed under only one extra condition on the penalty parameter $\sigma$, while the proof for the convergence is much more involved.
The peculiar feature of this method inspired us to get a further understanding of its theoretical foundation.
Moreover, we are also concerned with the question that to what extent can this algorithm being improved or generalized, as well as whether this ADMM-type algorithm can be applied to more general problems.

In order to conduct the theoretical analysis to address the concerns  mentioned above, an indispensable tool is the concept of the maximal monotone operator as well as the corresponding operator  splitting methods \cite{DRS,forward-backward-forward splitting} which are designed to find its roots.
The interconnection between the operator splitting methods and ADMM-type algorithms was first established by Gabay \cite{GABAY83}, who showed that the classic ADMM with unit step-length can be explained as the well-known Douglas-Rachford operator splitting method.
Based on this observation, Eckstein and Bertsekas \cite{ECKSTEIN92} presented a generalized ADMM for the purpose of improving the performance of the classic ADMM (with unit step-length) via an over-relaxation step.
We mention that for a recent survey on this topic one may refer to \cite{eck12}, and one also can refer to \cite{xiao} and references therein for more details and recent progresses on generalized ADMM.
Consequently, we are interested if one can also interpret the modified $3$-block ADMM in \cite{modified admm} via a certain operator splitting scheme and get further improvements on this algorithm via certain over-relaxation steps.

In this paper, we fulfil our objective  by showing that the modified $3$-block ADMM in \cite{modified admm} can be explained as an application of the $3$-operator splitting framework studied in Davis and Yin \cite{Three-Operator}.
We conduct our analysis in a much general setting in which the model that we will consider contains the CQSDP problem as a special case.
Moreover, based on this operator splitting perspective, we present a generalized $3$-block ADMM, in the sense of Eckstein and Bertsekas \cite{ECKSTEIN92},
in which an over-relaxation step is incorporated.
We mention that, such as in Xiao et al. \cite{xiao}, this kind of over-relaxation can lead to an obvious improvement on the numerical efficiency of ADMM-type algorithms.

The remaining parts of this paper are organized as follows.
In Section \ref{prel}, we give a quick review of the CQSDP problem and the modified 3-block ADMM algorithm proposed by Chang et al. \cite{modified admm}.
Section \ref{sec_Three-Operator} is devoted to the operator-splitting perspective of this modified $3$-block ADMM for the CQSDP problem.
In Section \ref{main}, we introduce the convex composite quadratic optimization model and present a generalized version of the modified $3$-block ADMM in \cite{modified admm} for solving this problem.
With the result established in Section \ref{sec_Three-Operator}, the convergence analysis of the proposed algorithm can be conducted in a very concise manner.
We conclude this paper in Section \ref{conclusion}.

\section{Preliminaries}
\label{prel}

\subsection{Basic Concepts}
Let $\cH$ be an arbitrary finite dimensional real Hilbert space endowed with an inner product denoted by $\langle \cdot, \cdot\rangle$ and its induced norm $\|\cdot\|$.
Let $\cF:\cH\to\cH$ be an arbitrary set-valued mapping.
If $\cF$ is single-valued, it is called  $\beta$-cocoercive (or $\beta$-inverse-strongly monotone)  for a certain constant $\beta>0$, if
$$\langle {\mathcal{F}}(x)-{\mathcal{F}}(x'), x-x'\rangle\geq \beta \|{\mathcal{F}}x- {\mathcal{F}}x'\|^2,\quad
\forall x, x'\in \cH;
$$
If $\cF$ is a self-adjoint positive semidefinite linear operator, we use $\lambda_{\max}(\cF)$ to denote its largest eigenvalue, i.e. $\lambda_{\max}(\cF):=\max_{\|x\|=1}\langle x, \cF x\rangle$.  In this case, it is easy to verify that
$\cF$ is $\frac{1}{\lambda_{\max}(\cF)}$-coercive.
If $\cF$ is a multi-valued maximal monotone operator and $\sigma>0$ is a constant,
the mapping $\cJ_{\sigma\cF}:=(\cI+\sigma\cF)^{-1}$, which is called the Minty resolvant of $\cF$,  is a single valued  mapping, and this mapping is also nonexpansive \cite[Theorem 12.12]{rocva}. Here, $\cI$ denotes the identity operator from $\cH$ to itself and it will be kept as the notation for the identity operator from any space to itself, if no ambiguity is caused.

Let $f:\cH\to(-\infty,\infty]$ be a closed proper convex function. The subdifferential mapping $\partial f$ of $f$ is then a maximal monotone operator and in this case
\begin{eqnarray*}
{\mathcal{J}}_{\sigma \partial f}(x)
= (\cI+\sigma\partial f)^{-1}(x)
= \argmin_z\left\{ f(z)+\frac{1}{2\sigma}\|x-z\|^2\right\},\quad\forall\, x\in\cH.
\end{eqnarray*}
For any set $\cC\subset \cH$, the indicator function $\delta_\cC:\cH\to(-\infty,\infty]$ is defined by $\delta_\cC(x)=0$ if $x\in\cC$ and $\delta_\cC(x)=+\infty$ otherwise.
If $\cC$ is a closed convex set, $\delta_\cC$ is therefore a closed proper convex function and, in this case,
$
{\mathcal{J}}_{\sigma \partial \delta_\cC}(x)=\Pi_\cC(x)
$, i.e., the metric projection of $x$ onto $\cC$,
and
$$
\partial\delta_\cC(x)=\cN_{\cC}(x):=\{z\,|\, \langle z, x'-x\rangle\le 0,\ \forall x'\in\cH\}.
$$
Here, the mapping $\cN_\cC$ is called the normal cone mapping of the set $\cC$ and $\cN_c(x)$ is called the normal cone of $\cC$ at $x$, which is a closed convex cone.

Let ${\mathcal{S}}^n$ be the space of $n\times n$ real symmetric matrices endowed with the standard trace inner product $\langle \cdot,\cdot \rangle$ and the Frobenius norm $\|\cdot\|$. We use ${\mathcal{S}}_{+}^{n}$ and ${\mathcal{S}}_{++}^{n}$ to denote the sets of symmetric positive semidefinite and positive definite matrices in ${\mathcal{S}}^{n}$, respectively.

\subsection{The CQSDP Problem}
The CQSDP problem takes the following standard form:

\begin{eqnarray}
\label{CQSDP}
\min_X \;\; && \frac{1}{2} \left\langle X,\varphi (X)\right\rangle +\langle C , X\rangle \nonumber\\
\mbox{s.t.} \;\;&& {\mathcal{A}}(X)= b ,\  X  \in {\mathcal{S}}^n_+,
\end{eqnarray}
where $\varphi: {\mathcal{S}}^n \rightarrow
{\mathcal{S}}^n$  is a given self-adjoint positive semidefinite linear operator,
${\mathcal{A}}: {\mathcal{S}}^n \rightarrow {\mathbb{R}}^m $  is a linear map defined by
\\
$$
{\mathcal{A}}(X):=\left(
\begin{array}{c}
\langle A_1 ,X\rangle \\
\vdots  \\
\langle A_m ,X\rangle
\end{array}
\right)\in {\mathbb{R}}^m,
\quad\forall X\in\cS^n
$$
with the given $A_i \in {\mathcal{S}}^n$, $i=1,\ldots,m$, and $ b \in {\mathbb{R}}^m$.
The adjoint of ${\mathcal{A}}$, with respect to the standard inner products in ${\mathcal{S}}^n$  and $ {\mathbb{R}}^m$, is denoted by  ${\mathcal{A}}^*$.

Note that ${\mathcal{S}}_{+}^{n}$ is a closed convex self-dual cone. Then, the dual of problem (\ref{CQSDP}) can be equivalently formulated in minimization form as follows
\begin{eqnarray}
\label{CQSDPD}
 \min_{W,y,Z} \;\;&&\frac{1}{2} \langle W,\varphi (W)\rangle -b^Ty+\delta_{{\mathcal{S}}^n_+}(Z) \nonumber\\
\mbox{s.t.} \;\; && -\varphi (W)+{\mathcal{A}}^*(y)+Z=C,
\end{eqnarray}
where $\delta_{\cS_+^n}$ is the indicator function of $\cS_+^n$, $W\in\cS^n$, $Z\in\cS^n$ and $y\in\bR^m$.
The Lagrangian function of problem \eqref{CQSDPD} is defined by
$$
\begin{array}{r}
l(W,y,Z;X):=\frac{1}{2}\langle W,\varphi (W)\rangle -b^Ty+\delta_{{\mathcal{S}}^n_+}(Z)+\langle -\varphi (W)+{\mathcal{A}}^*(y)+Z-C, X\rangle,\quad
\\[1mm]
\forall(W,y,Z,X)\in\cS^n\times\bR^m\times\cS^n\times\cS^n.
\end{array}
$$
Therefore, the KKT system of problem \eqref{CQSDPD} is given by
\begin{eqnarray}\label{KKT}
\left\{
\begin{array}{l}
\varphi(W)=\varphi(X),\
{\mathcal{A}}(X)=b,\\[1mm]
~-\varphi (W)+{\mathcal{A}}^*(y)+Z=C,\\[1mm]
~X\in {\mathcal{S}}^n_+,~Z\in {\mathcal{S}}^n_+,~\langle Z,X\rangle=0.
\end{array}\right.
\end{eqnarray}
For any $(W,y,Z,X)\in\cS^n\times\bR^m\times\cS^n\times\cS^n$ satisfying the KKT system \eqref{KKT}, $(W,y,Z)$ is a solution to problem \eqref{CQSDPD} while $X$ is a solution to problem \eqref{CQSDP}.

\subsection{A Modified $3$-Block ADMM for Problem (\ref{CQSDPD})}
\label{sec_Modified ADMM}
Let $\sigma>0$ be the penalty parameter. The augmented Lagrangian function of  problem (\ref{CQSDPD}) can be defined by
\[
\label{auglag}
\begin{array}{l}
\cL_\sigma(W,y,Z;X):=l(W,y,Z;X)+\frac{\sigma}{2}\|-\varphi (W)+{\mathcal{A}}^*(y)+Z-C\|^2,
\\[1mm]
\hspace{5cm}\forall(W,y,Z,X)\in\cS^n\times\bR^m\times\cS^n\times\cS^n.
\end{array}
\]
Choose an initial points $(y^0,Z^0,X^0)\in{\mathbb{R}}^m\times{\mathcal{S}}^n_+\times\in {\mathcal{S}}^n$.
A direct extension of the classic ADMM to the $3$-block problem (\ref{CQSDPD}) consists of the following steps, for $k=0,1,\ldots$,
\[
\left\{
\label{ADPsi}
\begin{array}{lcl}
W^{k+1}&:=&\ds\argmin_{W } {\mathcal{L}}_\sigma(W,y^{k},Z^{k};X^k), \\[1mm]
y^{k+1}&:=&\ds\argmin_{y} {\mathcal{L}}_\sigma(W^{k+1},y,Z^k;X^k), \\[1mm]
Z^{k+1}&:=&\ds\argmin_{Z } {\mathcal{L}}_\sigma(W^{k+1},y^{k+1},Z;X^k), \\[1mm]
X^{k+1}&:=&X^k+\tau\sigma\left({\mathcal{A}}^*(y^{k+1})+Z^{k+1}-\varphi (W^{k+1})- C\right),
\end{array}
\right.
\]
where $\tau> 0$ is the step-length. Generally, the convergence of the iteration scheme (\ref{ADPsi}) can not be guaranteed.

In Chang et al. \cite{modified admm}, the authors have proposed the following algorithm to solve  problem \eqref{CQSDPD}, by modifying the iteration scheme (\ref{ADPsi}).

\medskip
\centerline{\fbox{\parbox{0.97\textwidth}{
\begin{algo}[A Modified ADMM for the CQSDP problem (\ref{CQSDPD})]
\label{algo1}
\end{algo}
Let $\sigma>0$ be the given parameter. Choose $Z^0\in {\mathcal{S}}^n_+$ and $X^0\in {\mathcal{S}}^n$. For $k=0,1,\ldots,$
\begin{description}
\item[{\bf Step 1.}] Set $W^{k+1}:=X^{k}$;
\item[{\bf Step 2.}] Compute $y^{k+1}:=\argmin\limits_{y} {\mathcal{L}}_\sigma(W^{k+1},y,Z^k;X^k)$;
\item[{\bf Step 3.}] Compute $Z^{k+1}:=\argmin\limits_{Z} {\mathcal{L}}_\sigma(W^{k+1},y^{k+1},Z;X^k)$;
\item[{\bf Step 4.}] Update $X^{k+1}:=X^k+\sigma\left({\mathcal{A}}^*(y^{k+1})+Z^{k+1}-\varphi (W^{k+1})- C\right)$.
\end{description}
}}}

\medskip

Compared with the directly extended $3$-block ADMM scheme \eqref{ADPsi}, Algorithm \ref{algo1} always set $W^{k+1}$ as $X^k$ instead of minimizing the augmented Lagrangian function with respect to the block-variable $W$.
The original idea of this $3$-block ADMM is quite intuitive since that for any $(W,y,Z,X)\in\cS^n\times\bR^m\times\cS^n\times\cS^n$ being a solution to the KKT system of the CQSDP problem, $(X,y,Z,X)$ is also a solution to it.
Therefore, whenever dealing with a subproblem with respect to $W$, one may directly use the value of $X$ to substitute $W$ instead of solving this subproblem. However, the convergence analysis of Algorithm \ref{algo1} in \cite{modified admm} is very complicated.

The following Assumption was used
in \cite{modified admm} for
analyzing the convergence of Algorithm \ref{algo1}.
\begin{asmp}
\label{assum1}
The linear operator $\cA$ is surjective
and the Slater's constraint qualification holds for problem \eqref{CQSDP}, i.e. there exists a matrix $\tilde{X} \in {\mathcal{S}}^n_{++}$ satisfying ${\mathcal{A}}( \tilde{X} )= b$.
\end{asmp}
\begin{rem}
The first part of Assumption \ref{assum1} implies that the linear operator ${\mathcal{A}}{\mathcal{A}}^*$ is nonsingular.
Consequently, Step 2 of Algorithm \ref{algo1} is well-defined.
 Moreover, under Assumption \ref{assum1}, we know from \cite[Corollaries 28.2.2 \& 28.3.1]{rocbook} that $X\in\cS^n$ is a solution to problem \eqref{CQSDP} if and only if there exists a vector $(W,y,Z)\in\cS^n\times\bR^m\times\cS^n$ such that $(W,y,Z,X)$ is a solution to the KKT system \eqref{KKT}.
Additionally, for any $(W,y,Z,X)$ that satisfies \eqref{KKT}, from \cite[Corollary 30.5.1]{rocbook} we know that $X$ is an optimal solution to problem  \eqref{CQSDP} and  $(W,y,Z)$ is an optimal solution to problem \eqref{CQSDPD}.
\end{rem}
The convergence properties of Algorithm \ref{algo1} have been given in \cite[Theorem 1]{modified admm}. We summarize these results as follows.

\begin{prop}
\label{propconv}
Suppose that the solutions set to problem \eqref{CQSDP} is nonempty, Assumption \ref{assum1} holds and  $\sigma\in \left(0,\frac{1}{\lambda_{\max}(\varphi)}\right]$.  Then, the sequence $\{(W^k,y^k,Z^k, X^k)\}$ generated by Algorithm \ref{algo1} converges to a point which is a solution to the KKT system \eqref{KKT}.
\end{prop}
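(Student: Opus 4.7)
The plan is to recast Algorithm \ref{algo1} as one sweep of the three-operator splitting of Davis and Yin \cite{Three-Operator} applied to a monotone inclusion equivalent to the KKT system \eqref{KKT}, and then to quote the convergence theorem for that scheme.

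The first step would be to rewrite \eqref{KKT} as a single monotone inclusion in $X$. Under Assumption \ref{assum1}, the Remark following Assumption \ref{assum1} ensures that $X^\star$ solves \eqref{CQSDP} if and only if there exist $y^\star$ and $Z^\star$ for which $(X^\star,y^\star,Z^\star,X^\star)$ satisfies \eqref{KKT}; in particular, the choice $W=X$ is always consistent. Substituting $W=X$ and folding the complementarity relations into a normal cone condition, \eqref{KKT} is seen to be equivalent to
\[
0\in \varphi(X)+\partial g_1(X)+\partial g_2(X),
\]
where $g_1(X):=\langle C,X\rangle+\delta_{\{X\,:\,\cA X=b\}}(X)$ and $g_2(X):=\delta_{\cS^n_+}(X)$. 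The single-valued operator $\cF:=\varphi$ is, as noted in the preliminaries, $1/\lambda_{\max}(\varphi)$-cocoercive, while $\partial g_1$ and $\partial g_2$ are maximal monotone. This is exactly the structure on which the Davis--Yin scheme operates.

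Next, I would match Algorithm \ref{algo1} with one iteration of Davis--Yin by choosing the Davis--Yin auxiliary variable as a specific affine combination of $X^k$, $Z^k$, and $\varphi(X^k)$. Step 2, which computes $y^{k+1}$ through a linear system involving $\cA\cA^*$ (nonsingular by Assumption \ref{assum1}), would correspond to the resolvent of $\sigma\partial g_1$; Step 3, which reduces to $\Pi_{\cS^n_+}$ after completing the square in $Z$, would correspond to the resolvent of $\sigma\partial g_2$; and Step 4 would reproduce the auxiliary-variable update. The shortcut $W^{k+1}:=X^k$ in Step 1 is what makes the matching consistent, because in Davis--Yin the forward step on the cocoercive operator is evaluated at the previous resolvent iterate, and under the intended identification that iterate is precisely $X^k$; hence $\varphi(W^{k+1})=\varphi(X^k)$ is the correct quantity to feed into Steps 2 and 3.

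With the identification in hand, convergence then follows from the convergence theorem for the Davis--Yin splitting in \cite{Three-Operator}: whenever the stepsize $\sigma$ satisfies $\sigma\in(0,2\beta)$ with $\beta$ the cocoercivity constant of $\cF$, the generated iterates converge to a fixed point encoding a solution of the inclusion. Since $\beta=1/\lambda_{\max}(\varphi)$, the hypothesis $\sigma\in(0,1/\lambda_{\max}(\varphi)]$ is contained in $(0,2/\lambda_{\max}(\varphi))$, and this applies. Reading back through the identification, the limit encodes a tuple $(W^\star,y^\star,Z^\star,X^\star)$ satisfying \eqref{KKT}, as desired. I expect the main technical obstacle to be the second step: pinning down the exact affine combination of $(X^k,Z^k,\varphi(X^k))$ that serves as the Davis--Yin auxiliary variable and then checking that the resolvent formulas line up verbatim with the four steps of Algorithm \ref{algo1}.
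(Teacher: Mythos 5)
Your proposal is correct in substance, but it does not follow the proof the paper actually assigns to Proposition \ref{propconv}: there the result is simply quoted from the direct (and, as the paper itself notes, quite involved) analysis in \cite[Theorem 1]{modified admm}. What you propose instead --- recasting the KKT system as the inclusion $0\in\nabla h(X)+\cN_{K}(X)+\cN_{\cS^n_+}(X)$ and identifying Algorithm \ref{algo1} with one sweep of the Davis--Yin operator --- is precisely the alternative proof that the paper constructs in Section \ref{sec_Three-Operator} (Theorem \ref{thmmain} and the remark following it) and then generalizes in Section \ref{main}. So you have, in effect, anticipated the paper's main contribution rather than reproduced its proof of this particular proposition. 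The trade-off is the expected one: the citation route gives convergence for $\sigma\in\left(0,1/\lambda_{\max}(\varphi)\right]$ via a bespoke estimate, whereas the splitting route yields the weaker requirement $\sigma\in\left(0,2/\lambda_{\max}(\varphi)\right)$ essentially for free from \cite[Theorem 1.1]{Three-Operator}.

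Three details are worth flagging. First, the paper places the linear term $\langle C,X\rangle$ in the smooth part $h$ rather than in your $g_1$; both splittings have the same cocoercivity constant $1/\lambda_{\max}(\varphi)$, but the paper's choice is what makes the resolvent of the affine constraint reduce to the plain projection $\Pi_{K}$ and the forward step reproduce the quantity $\varphi(W^{k+1})+C$ appearing in Steps 2--4, so the bookkeeping in your ``technical obstacle'' step is cleaner with that assignment. Second, the auxiliary variable you are looking for is $U^k:=X^k-\sigma(\varphi(X^k)+C-\cA^*y^{k+1})$, equivalently $U^k=X^{k+1}-\sigma Z^{k+1}$; note that it involves $y^{k+1}$, hence is affine in $(X^k,Z^k)$ only after the linear system of Step 2 is substituted in, and the correct resolvent order is $\Pi_{\cS^n_+}$ first (producing $X^{k+1}$) and $\Pi_{K}$ second (producing $y^{k+2}$, with an index shift). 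Third, the Davis--Yin theorem gives convergence of $\{U^k\}$ and $\{X^k\}$ only; to conclude convergence of the full tuple $(W^k,y^k,Z^k,X^k)$ to a KKT point, as the proposition asserts, you still need the short continuity argument the paper carries out for the general model --- expressing $y^{k+2}$ and $Z^{k+1}$ as continuous functions of $(X^{k+1},U^k)$ and passing to the limit in the optimality conditions. None of these is a gap in the idea, but the last one is a step your write-up currently gestures at rather than performs.
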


\begin{rem}
If $\varphi=0$, problem (\ref{CQSDP}) is then a standard linear semidefinite programming problem and its dual will be a 2-block convex optimization problem.
In this case, the requirement that $\sigma\in\left(0,\frac{1}{\lambda_{\max}(\varphi)}\right]$ is no longer necessary and Algorithm \ref{algo1} is then automatically the classic 2-block ADMM with the unit step-length.
\end{rem}

\section{A Three-Operator Splitting Perspective}
\label{sec_Three-Operator}
In this  section, we narrate  Algorithm \ref{algo1} from a $3$-operator splitting perspective.  Note that the problem \eqref{CQSDP} can be written as
$$
\min_{X\in\cS^n} f(X) + g(X) + h(X),
$$
where
$$
\left\{
\begin{array}{l}
\ds
f(X):= \delta_{K}(X)\quad\mbox{with}\quad K:=\{X\in\cS^n |\cA(X)=b\},
\\[2mm]
\ds
g(X):=\delta_{\cS^n_+}(X),
\\[2mm]
\ds
h(X):=\frac{1}{2}\langle X,\varphi(X)\rangle+\langle C, X\rangle.
\end{array}
\right.
$$
Under Assumption \ref{assum1}, we know from \cite[Theorem 23.8]{rocbook} that $X$ is a solution to problem \eqref{CQSDP} if and only if
$$
0\in\cN_{K}(X)+\cN_{\cS_+}(X) +\left( \varphi(X)+C\right) = \partial f(X) + \partial g(X) +\nabla  h (X).
$$
Therefore, one can try to solve problem \eqref{CQSDP} via solving the above inclusion problem.
In fact, Algorithm \ref{algo1} can be interpreted as an operator splitting algorithm applied to solve this inclusion problem.
This will be explained in details as follows.

Let $\{X^k\}$, $\{y^k\}$ and $\{Z^k\}$ be the sequences generated by Algorithm \ref{algo1}. We define for $k\ge 0$,
\[
\label{defuk}
U^{k}:=X^k-\sigma(\varphi(X^k)+C-\cA^*y^{k+1}).
\]
Moreover, just as \cite[Equation (1.2)]{Three-Operator}, we define the mapping $\cT:\cS^n\to\cS^n$ by
$$
\cT:=\cI-\cJ_{\sigma\cN_{\cS_+^n}}
+\cJ_{\sigma\cN_{K}}\circ\big(2\cJ_{\sigma\cN_{\cS_+^n}}-\cI-\sigma\nabla  h (\cJ_{\sigma\cN_{\cS_+^n}})\big).
$$
Then, we have the following result.
\begin{thm}
\label{thmmain}
Let $\{X^k\}$, $\{y^k\}$ and $\{Z^k\}$ be the sequences generated by Algorithm \ref{algo1}. Then the sequence $\{U^k\}$ defined in \eqref{defuk} satisfies
$$
U^{k+1}=\cT(U^k).
$$
\end{thm}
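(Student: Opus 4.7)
The plan is to verify the identity $U^{k+1} = \cT(U^k)$ by matching Algorithm \ref{algo1} term-by-term with the Davis--Yin three-operator splitting iteration applied to the inclusion $0\in \partial f(X)+\partial g(X)+\nabla h(X)$ with the splitting
$$f=\delta_K,\qquad g=\delta_{\cS_+^n},\qquad h(X)=\tfrac{1}{2}\langle X,\varphi(X)\rangle+\langle C,X\rangle,$$
so that $\cJ_{\sigma\cN_{\cS_+^n}}=\Pi_{\cS_+^n}$, $\cJ_{\sigma\cN_K}=\Pi_K$, and $\nabla h(X)=\varphi(X)+C$. A DYS step starting from a ``dual'' point $z^k$ produces first an intermediate point $x^{k+1/2}=\cJ_{\sigma\cN_{\cS_+^n}}(z^k)$, then the projection $x^{k+1}=\cJ_{\sigma\cN_K}\bigl(2x^{k+1/2}-z^k-\sigma\nabla h(x^{k+1/2})\bigr)$, and finally $z^{k+1}=z^k+x^{k+1}-x^{k+1/2}$. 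So the task boils down to showing that $U^k$ plays the role of $z^k$, and that $X^{k+1}$ plays the role of $x^{k+1/2}$.

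First I would write out the optimality conditions for Steps 2 and 3 of Algorithm \ref{algo1}. Step 3 yields a closed-form expression $Z^{k+1}=\Pi_{\cS_+^n}\!\bigl(\varphi(X^k)+C-\cA^{*}y^{k+1}-X^k/\sigma\bigr)$, which by the definition of $U^k$ is exactly $\Pi_{\cS_+^n}(-U^k/\sigma)$. Combining Step 4 with the substitution $W^{k+1}=X^k$ from Step 1 produces $X^{k+1}=U^k+\sigma Z^{k+1}$. Using positive homogeneity of projection onto a cone and Moreau's decomposition $U^k=\Pi_{\cS_+^n}(U^k)-\Pi_{\cS_+^n}(-U^k)$, the identity $X^{k+1}=U^k+\sigma Z^{k+1}$ then collapses to
$$X^{k+1}=\Pi_{\cS_+^n}(U^k)=\cJ_{\sigma\cN_{\cS_+^n}}(U^k),$$
which matches the intermediate DYS iterate $x^{k+1/2}$.

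Next I would identify the forward step. Writing out $\cT(U^k)$ using the definition,
$$\cT(U^k)=U^k-X^{k+1}+\Pi_K\!\bigl(2X^{k+1}-U^k-\sigma(\varphi(X^{k+1})+C)\bigr),$$
and I want this to equal $U^{k+1}=X^{k+1}-\sigma(\varphi(X^{k+1})+C-\cA^{*}y^{k+2})$. Since $\Pi_K(Y)=Y+\cA^{*}(\cA\cA^{*})^{-1}(b-\cA Y)$, the required identity reduces to verifying that the correction $\Pi_K(Y)-Y$ (with $Y=2X^{k+1}-U^k-\sigma(\varphi(X^{k+1})+C)$) equals $\sigma\cA^{*}y^{k+2}$. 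Because $\cA^{*}$ is injective under Assumption \ref{assum1}, this is equivalent to the scalar relation $\sigma\cA\cA^{*}y^{k+2}=b-\cA Y$. Applying the optimality condition for $y^{k+2}$ (the Step 2 equation at iteration $k+1$) produces $\sigma\cA\cA^{*}y^{k+2}=b-\cA\bigl(X^{k+1}+\sigma Z^{k+1}-\sigma\varphi(X^{k+1})-\sigma C\bigr)$. Matching this to $b-\cA Y$ collapses precisely to $X^{k+1}=U^k+\sigma Z^{k+1}$, already established.

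The main conceptual obstacle is recognizing the correct shift between the ADMM dual variable $y^{k+1}$ and the DYS dual point $U^k$: although the iteration of Algorithm \ref{algo1} produces $y^{k+1}$ inside iteration $k$, it is $y^{k+2}$, produced inside iteration $k+1$, that encodes the affine projection hidden in $\cT(U^k)$; this is why $U^k$ uses $y^{k+1}$ while $\Pi_K(\cdot)$ reads off $y^{k+2}$. Once this shift is kept straight, the remaining computations are just the two optimality conditions of Steps 2 and 3 combined with Moreau's decomposition for the self-dual cone $\cS_+^n$, and everything lines up cleanly.
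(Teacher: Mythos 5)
Your proposal is correct and follows essentially the same route as the paper: identify $X^{k+1}=\cJ_{\sigma\cN_{\cS_+^n}}(U^k)$, express $\Pi_K$ via $(\cA\cA^*)^{-1}$, and match the correction term with $\sigma\cA^*y^{k+2}$ using the Step-2 optimality condition at iteration $k+1$. The only cosmetic difference is that you derive $X^{k+1}=\Pi_{\cS_+^n}(U^k)$ via the explicit formula for $Z^{k+1}$ together with Moreau's decomposition, whereas the paper reads the same identities off its reorganized form of Steps 1--4; the index shift you highlight ($U^k$ built from $y^{k+1}$, $\cT(U^k)$ revealing $y^{k+2}$) is exactly the bookkeeping the paper's computation relies on.
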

\begin{proof}
Note that for any $k\ge 0$, steps $1$ to $4$ of Algorithm \ref{algo1} can be reorganized as follows
\begin{equation}
\left\{
\label{yk}
\begin{array}{l}
y^{k+1}:=-(\cA\cA^*)^{-1}((\cA X^k-b)/\sigma+\cA(Z^k-\varphi(X^k)-C)),
\\[2mm]
X^{k+1}:=\Pi_{\cS^n_+}(U^{k}),
\\[2mm]
Z^{k+1}:=(X^{k+1}-U^{k})/\sigma.
\end{array}
\right.
\end{equation}
Therefore, it holds that
\[
\label{xplus}
X^{k+1}=(\cI+\sigma \cN_{\cS^n_+})^{-1}(U^k)=\cJ_{\sigma\cN_{\cS_+^n}}(U^k).
\]
Moreover, one can readily obtain that
$$
\begin{array}{ll}
y^{k+2}&=-\frac{1}{\sigma}(\cA\cA^*)^{-1}( \cA X^{k+1}-b +\sigma\cA(Z^{k+1}-\varphi(X^{k+1})-C))
\\[2mm]
&=-\frac{1}{\sigma}(\cA\cA^*)^{-1}\left( \cA X^{k+1}-b +\sigma\cA\left[\frac{1}{\sigma}(X^{k+1}-U^k)-\varphi(X^{k+1})-C\right]\right)
\\[2mm]
&=-\frac{1}{\sigma}(\cA\cA^*)^{-1}\left( \cA X^{k+1}-b +\cA (X^{k+1}-U^k)-\sigma\cA\left[\varphi(X^{k+1})+C\right]\right)
\\[2mm]
&
=-\frac{1}{\sigma}(\cA\cA^*)^{-1}\left( \cA \left(2X^{k+1}- U^k-\sigma\big[\varphi(X^{k+1})+C\big]\right)-b\right).
\end{array}
$$
Note that
$$\Pi_{K}(X)=X-\cA^*(\cA\cA^*)^{-1}(\cA X-b),
\quad\forall X\in\cS^n. $$
Then, by using \eqref{xplus} we can get
$$
\begin{array}{l}
\left(\cJ_{\sigma\cN_{K}}\circ\big(2\cJ_{\sigma\cN_{\cS_+^n}}-\cI-\sigma\nabla  h (\cJ_{\sigma\cN_{\cS_+^n}})\big)\right)(U^k)
\\[2mm]
=
\Pi_{K}\left(2X^{k+1}- U^k-\sigma\big[\varphi(X^{k+1})+C\big]\right)
\\[2mm]
=\left(2X^{k+1}- U^k-\sigma\big[\varphi(X^{k+1})+C\big]\right)+\sigma\cA^*y^{k+2}.
\end{array}
$$
Moreover, it is easy to see from \eqref{defuk} that
$$
\begin{array}{ll}
U^{k+1}
&
=X^{k+1}-\sigma(\varphi(X^{k+1})+C-\cA^*y^{k+2})
\\[2mm]
&
=X^{k+1}+\sigma\cA^*y^{k+2}-\sigma\big[\varphi(X^{k+1})+C\big ]
\\[2mm]
&=U^{k}+\left(X^{k+1}- U^k-\sigma\big[\varphi(X^{k+1})+C\big ]\right)+\sigma\cA^*y^{k+2}
\\[2mm]
&=U^{k}-X^{k+1}+\left(2X^{k+1}- U^k-\sigma\big[\varphi(X^{k+1})+C\big ]\right)+\sigma\cA^*y^{k+2}
\\[2mm]
&=U^{k}-\cJ_{\sigma\cN_{\cS_+^n}}(U^k)+
\left(\cJ_{\sigma\cN_{K}}\circ\big(2\cJ_{\sigma\cN_{\cS_+^n}}-\cI-\sigma\nabla  h (\cJ_{\sigma\cN_{\cS_+^n}})\big)\right)(U^k),
\end{array}
$$
which, together with the definition of $\cT$, completes the proof.
\qed
\end{proof}
\begin{rem}
The definition of the operator $\cT$ was introduced by Davis and Yin \cite[Equation (1.2)]{Three-Operator}. This operator was regarded as a combination of the well-known Douglas-Rachford splitting and the forward-backward splitting.
As will be seen in the next section, based on the properties of $\cT$, the global convergence of Algorithm \ref{algo1}, with the looser requirement $\sigma\in (0, \frac{2}{\lambda_{\max}(\varphi)})$, can be alternatively proved by using \cite[Theorem 2.1]{Three-Operator} together with Theorem \ref{thmmain}.
 \end{rem}

\section{Generalizations and Extensions}
\label{main}
The successful application of Algorithm \ref{algo1} to the CQSDP problem \eqref{CQSDP} via its dual \eqref{CQSDPD} and the explanation from the operator splitting perspective made in Section
\ref{sec_Three-Operator} inspired us to consider extending this algorithm to much general problems.
In this section, we consider the following convex composite quadratic programming \cite{Schur Complement} problem
\begin{eqnarray}
\label{cqp}
\min_{x\in\cX} \;\; && \theta^*(x)+\frac{1}{2} \left\langle x,\cQ (x)\right\rangle +\langle c , x\rangle \nonumber\\
\mbox{s.t.} \;\;&& {\mathcal{A}} x = b,
\end{eqnarray}
where $\cA:\cX\to\cY$ is a linear map, $\cX$ and $\cY$ are finite dimensional Euclidean spaces each endowed with a inner product $\langle\cdot,\cdot\rangle$ and its induced norm $\|\cdot\|$. $\theta^*$ is the Fenchel conjugate function of the closed proper convex (possibly nonsmooth) function $\theta:\cX\to(-\infty,\infty]$, $\cQ:\cX\to\cX$ is a self-adjoint positive semidefinite linear operator, and $c\in\cX$ and $b\in\cY$ are the given data.
Obviously, problem (\ref{CQSDPD}) is an instance of problem (\ref{cqpd})
 in which $\cX=\cS^n$, $\cY=\mathbb{R}^m$ and $\theta^*$ being the indicator function of $\cS^n_+$.
We make the following assumption on problem \eqref{cqp}.
\begin{asmp}
\label{ass2}
The linear operator $\cA$ is surjective and there exists a point $x\in\ri(\dom\theta^*)$ such that $\cA x=b$.
\end{asmp}
Under Assumption \ref{ass2} we know that $x$ is a solution to problem \eqref{cqp} if and only if there exists a vector $(w,y,z)\in\cX\times\cY\times\cX$ such that $(x,w,y,z)$ solves the following KKT system of problem \eqref{cqpd}
\begin{eqnarray}
\label{KKT2}
\left\{
\begin{array}{l}
\cQ w=\cQ x,\\[1mm]
\cA x-b=0,\\[1mm]
0\in x-\partial \theta(-z),\\[1mm]
\cA^*y+z-\cQ w-c=0.
\end{array}\right.
\end{eqnarray}
Moreover, such a vector $(w,y,z)$ is a solution to the dual of problem \eqref{cqp}, which can equivalently be recast in minimization form as
\[
\label{cqpd}
\min_{w,y,z}\left\{ \frac{1}{2}\langle w,\cQ w\rangle -\langle b,y\rangle+\theta(-z)
\ |\
-\cQ w+\cA^*y+z=c
\right\},
\]
where the decision variables $w\in\cX,y\in\cY$ and $z\in\cX$.

Let $\sigma>0$ be the penalty parameter. The augmented Lagrangian function of problem \eqref{cqpd} is defined by
$$
\begin{array}{ll}
\ds
\cL_{\sigma}(w,y,z;x):=&\frac{1}{2}\langle w,\cQ w\rangle -\langle b,y\rangle+\theta(-z)
 +\langle \cA^*y+z-\cQ w-c,x\rangle+\frac{\sigma}{2}\|\cA^*y+z-\cQ w-c\|^2,\\[2mm]
&\hfill\forall(w,y,z;x)\in\cX\times\cY\times\cX\times\cX.
\end{array}
$$
In sequel, we will extend and generalize Algorithm \ref{algo1} to problem (\ref{cqpd}), and prove its convergence via the existing convergence theorem of the 3-operator splitting method in \cite{Three-Operator}.

\medskip
\centerline{\fbox{\parbox{0.97\textwidth}{
\begin{algo}[A Generalized Modified ADMM for problem (\ref{cqpd})]\label{algo2}
\end{algo}
Let $\sigma>0$ and $\rho\in(0,2)$. Choose initial variables $z^0$ such that $-z^0\in\dom\theta$ and $x^0\in\cX$. For $k=0,1,\ldots,$
\begin{description}
\item[{\bf Step 1.}] Set $w^{k+1}:=x^{k}$;
\item[{\bf Step 2.}] Compute $y^{k+1}:=\argmin\limits_{y} {\mathcal{L}}_\sigma(w^{k+1},y,z^k;x^k)$;
\item[{\bf Step 3.}]
Compute
$$
\begin{array}{rl}
\ds  z^{k+1}:=\argmin_{z}&\left\{
 \theta(-z)+\langle z,x^k\rangle
  +\frac{\sigma}{2}\|\rho\cA^*y^{k+1}-(1-\rho)z^k+z-\rho\cQ w^{k+1}-\rho c\|^2\right\};\\
\end{array}
$$
\item[{\bf Step 4.}] Update $x^{k+1}:=x^k+
\sigma (\rho\cA^*y^{k+1}-(1-\rho)z^k+z^{k+1}-\rho\cQ w^{k+1}-\rho c).$
\end{description}
}}}

\medskip
\begin{rem}
The above algorithm is called a generalized modified ADMM since that it can viewed as a direct extension of the generalized $2$-block ADMM \cite{ECKSTEIN92} to problem \eqref{cqpd}.  Moreover, we should mention that, generally, since that $\cA$ is surjective, all the subproblems are well-defined and admit unique solutions.
The well-definedness of subproblems is very essential for ADMM-type algorithm. On this part, one may refer to a counterexample by Chen et al. \cite[Section 3]{admmnote}.
\end{rem}

\begin{rem}
For the case that $\rho=1$, step $3$ of Algorithm \ref{algo2} turns to
$$
z^{k+1}=\argmin_z\cL_{\sigma}(w^{k+1},y^{k+1},z;x^k).
$$
In this case, the direct extension of the generalized ADMM is then a direct extension of the classic ADMM with unit step-length, whose $k$-th step takes the following form
\[
\label{admm3e}
\left
\{\begin{array}{l}
w^{k+1}\in\argmin_{w} {\mathcal{L}}_\sigma(w,y^{k},z^{k};x^k),\\[2mm]
y^{k+1}\in\argmin_{y} {\mathcal{L}}_\sigma(w^{k+1},y,z^k;x^k),\\[2mm]
z^{k+1}\in\argmin_{z} {\mathcal{L}}_\sigma(w^{k+1},y^{k+1},z;x^k),\\[2mm]
x^{k+1}=x^k+\sigma ( \cA^*y^{k+1}+z^{k+1}-\cQ w^{k+1}- c).
\end{array}
\right.
\]
If the order of solving the subproblems is further changed as follows
$$
\left
\{\begin{array}{l}
y^{k+1}\in\argmin_{y} {\mathcal{L}}_\sigma(w^{k},y,z^k;x^k),\\[2mm]
w^{k+1}\in\argmin_{w} {\mathcal{L}}_\sigma(w,y^{k+1},z^{k};x^k),\\[2mm]
z^{k+1}\in\argmin_{z} {\mathcal{L}}_\sigma(w^{k+1},y^{k+1},z;x^k),\\[2mm]
\end{array}
\right.
$$
the convergence of this direct extension of the class ADMM has been established in Li et al. \cite[Theorem 2.1]{limin} under certain conditions\footnote{
In \cite{limin}, the authors also have considered adding proximal terms to subproblems and using a dual step-size which can be chosen in $\big(0,(1+\sqrt{5})/2\big)$.
Since that one can restrict $w$ always in the range space of the linear operator $\cQ$ so that $f_1(w)=\frac{1}{2}\langle w,\cQ w\rangle$ is a strongly convex function. Hence, the results in \cite{limin} are applicable.}.
In \cite[Section 4.2]{Three-Operator}, the authors have considered another $3$-block extension of the classic ADMM, i.e., \cite[Algorithm 7]{Three-Operator}.
The difference of this extension from \eqref{admm3e} is that the subproblem for computing  $w^{k+1}$ does not contain the penalty term $\frac{\sigma}{2}\|\cA^*y^k+z^k-\cQ w-c\|$.
Moreover, in the corresponding convergence analysis, it requires $\sigma\in\left(0,\frac{2\lambda^+_{\min}(\cQ)}{(\lambda_{\max} (\cQ))^2}\right)$, where $\lambda^+_{\min}(\cQ)$ denotes the smallest positive eigenvalue of $\cQ$.
This requirement of $\sigma$ is obviously stronger than the condition that $\sigma\in \left(0,\frac{2}{\lambda_{\max}(\cQ)}\right)$, which will be used in the forthcoming convergence analysis of Algorithm \ref{algo2}.
\end{rem}

Next, we analyze the convergence properties of Algorithm \ref{algo2}.
Suppose that $\{w^k\}$, $\{y^k\}$, $\{z^k\}$ and $\{x^k\}$ be the infinite sequences generated by Algorithm \ref{algo2}. Define for $k\ge 0$
\[
\label{sequk}
u^{k}:=x^k+
\sigma (\rho\cA^*y^{k+1}-(1-\rho)z^k -\rho\cQ w^{k+1}-\rho c)
=x^{k+1}-\sigma z^{k+1}.
\]
For convenience, we define the convex set
$$
\cK:=\{x\in\cX|\cA x=b\},
$$
and the
quadratic function $q:\cX\to(-\infty,\infty)$ by
$$
q(x):=\frac{1}{2}\langle x,\cQ x\rangle+\langle c,x\rangle,\quad \forall x\in\cX.
$$
Then, the gradient of the function $q$ is given by
$\nabla q(x)=\cQ x+c$, $\forall x\in\cX$.
Moreover, we define a single-valued mapping $\Gamma:\cX\to\cX$ by
\[
\label{mainoper}
\Gamma:=\cI-\cJ_{\sigma\partial \theta^*}+
\cJ_{\sigma\cN_\cK}\circ\left(2 \cJ_{\sigma\partial\theta^*}-\cI-\sigma \nabla q\circ \cJ_{\sigma\partial\theta^*}\right) .
\]
Based on the above definitions we have the following result.

\begin{prop}
\label{prop2}
Suppose that $\{w^k\}$, $\{y^k\}$ and $\{z^k\}$ are the infinite sequences generated by Algorithm \ref{algo2}, and $\{u^k\}$ is the sequence defined by \eqref{sequk}.
Then, one has that
$$
u^{k+1}=(1-\rho)u^k+\rho\Gamma(u^{k}).
$$
\end{prop}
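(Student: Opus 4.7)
The strategy is to mimic the proof of Theorem~\ref{thmmain}, replacing $\cN_{\cS^n_+}$ by $\partial\theta^*$, $K$ by $\cK$, and $\nabla h$ by $\nabla q$, while carrying through the extra bookkeeping produced by the relaxation parameter $\rho\in(0,2)$. The target form $u^{k+1}=(1-\rho)u^k+\rho\Gamma(u^k)$ already hints that a factor of $\rho$ should peel off cleanly after the closed-form updates are substituted.

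First I would derive closed-form expressions for the subproblems of Algorithm~\ref{algo2}. The $y$-subproblem is quadratic and gives
\[
y^{k+1}=-(\cA\cA^*)^{-1}\bigl[\cA(z^k-\cQ w^{k+1}-c)+(\cA x^k-b)/\sigma\bigr].
\]
Step~4 together with the definition of $u^k$ in \eqref{sequk} yields $x^{k+1}=u^k+\sigma z^{k+1}$, and inserting this into the first-order optimality condition of the $z$-subproblem in Step~3 produces $x^{k+1}\in\partial\theta(-z^{k+1})$. By Fenchel duality between $\theta$ and $\theta^*$, this is equivalent to $-z^{k+1}\in\partial\theta^*(x^{k+1})$, so that $u^k\in(\cI+\sigma\partial\theta^*)(x^{k+1})$ and
\[
x^{k+1}=\cJ_{\sigma\partial\theta^*}(u^k),
\]
the direct analogue of \eqref{xplus}.

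Next, I would compute $u^{k+1}$. Shifting the index in \eqref{sequk}, using $w^{k+2}=x^{k+1}$ from Step~1, and eliminating $(1-\rho)\sigma z^{k+1}$ via $\sigma z^{k+1}=x^{k+1}-u^k$, a routine rearrangement produces
\[
u^{k+1}=(1-\rho)u^k+\rho\bigl[x^{k+1}+\sigma\cA^*y^{k+2}-\sigma\nabla q(x^{k+1})\bigr],
\]
so it remains to identify the bracketed quantity with $\Gamma(u^k)$.

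For this last step I would substitute the closed form of $y^{k+2}$ (with $w^{k+2}=x^{k+1}$ and $\sigma z^{k+1}=x^{k+1}-u^k$). Setting $v:=2x^{k+1}-u^k-\sigma\nabla q(x^{k+1})$, the calculation collapses to $\sigma\cA^*y^{k+2}=-\cA^*(\cA\cA^*)^{-1}(\cA v-b)$. Since $\cA\cA^*$ is invertible under Assumption~\ref{ass2}, the formula $\Pi_\cK(v)=v-\cA^*(\cA\cA^*)^{-1}(\cA v-b)=\cJ_{\sigma\cN_\cK}(v)$ gives
\[
x^{k+1}+\sigma\cA^*y^{k+2}-\sigma\nabla q(x^{k+1})=u^k-x^{k+1}+\cJ_{\sigma\cN_\cK}(v),
\]
which, combined with $x^{k+1}=\cJ_{\sigma\partial\theta^*}(u^k)$ and definition \eqref{mainoper}, is exactly $\Gamma(u^k)$. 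The main obstacles I anticipate are (i) applying Fenchel duality carefully so that Step~3 is recognized as a resolvent of $\partial\theta^*$ rather than of $\partial\theta$, and (ii) bookkeeping the factor $\rho$ so that the final expression separates cleanly into a convex combination; once these are handled, the $\Pi_\cK$-recognition proceeds exactly as in Theorem~\ref{thmmain}.
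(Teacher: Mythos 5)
Your proposal is correct and follows essentially the same route as the paper's own proof: you identify $x^{k+1}=\cJ_{\sigma\partial\theta^*}(u^k)$ via the conjugate subgradient theorem applied to the Step~3 optimality condition, express $y^{k+2}$ in closed form to recognize $\Pi_\cK=\cJ_{\sigma\cN_\cK}$, and rearrange using $\sigma z^{k+1}=x^{k+1}-u^k$. The only cosmetic difference is that you peel off the convex-combination structure $(1-\rho)u^k+\rho[\cdot]$ before identifying the bracket with $\Gamma(u^k)$, whereas the paper substitutes everything first and regroups at the end; the algebra is identical.
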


\begin{proof}
Note that for any $k\ge 0$
\[
\label{optcond1}
\begin{array}{rl}
0&\in-\partial \theta(-z^{k+1})+ x^k+\sigma (\rho\cA^*y^{k+1}-(1-\rho)z^k+z^{k+1}-\rho\cQ w^{k+1}-\rho c)
\\[2mm]
&=-\partial \theta(-z^{k+1})+ x^{k+1}.
\end{array}
\]
Since that $\theta$ is a closed proper convex function, by using \cite[Theorem 23.5]{rocbook} we have that
$
x^{k+1}\in \partial \theta(-z^{k+1})
$
so that
$
-z^{k+1}\in\partial \theta^*( x^{k+1})
$.
Therefore, it holds that
$$
\begin{array}{ll}
0\in
& \partial \theta^*( x^{k+1})+   z^{k+1}
= \partial \theta^*( x^{k+1})
+\frac{1}{\sigma}\left(x^{k+1}-(x^{k+1}-\sigma  z^{k+1})\right)
\\[2mm]
&= \partial \theta^*( x^{k+1})
+\frac{1}{\sigma} (x^{k+1}-u^k),
\end{array}
$$
where we have used the fact that $u^k=x^{k+1}-\sigma  z^{k+1}$ from \eqref{sequk}.
Thus, by using \cite[Theorem 23.8 \& 23.9]{rocbook} and the above inclusion one can get that
\[
\label{pf21}
x^{k+1}=
\argmin_{x}\left\{ \theta^*( x)+\frac{1}{2\sigma}\|x-u^k\|^2\right\}
=\left(\cI+\sigma\partial \theta^*\right)^{-1}(u^k)
=\cJ_{\sigma\partial\theta^*}(u^k).
\]
On the other hand, one can readily obtain that
\[
\label{optt}
0=-b+\cA x^{k+1}
+\sigma\cA(\cA^*y^{k+2}+z^{k+1}-\cQ w^{k+2}-c).
\]
Therefore,
\[
\label{ykp2}
\begin{array}{ll}
y^{k+2}&=-[\sigma\cA\cA^*]^{-1}\left((\cA x^{k+1}-b)
+\sigma \cA(z^{k+1}-\cQ x^{k+1}-c)\right)
\\[2mm]
&=-[\sigma\cA\cA^*]^{-1}\left( \cA (2x^{k+1}-u^k-\sigma(\cQ x^{k+1}+c) )-b\right).
\end{array}
\]
Note that for any $\xi\in\cX$ one has $\Pi_{\cK}(\xi)=\xi-\cA^*(\cA\cA^*)^{-1}(\cA \xi-b)$.
Consequently, by using \eqref{pf21} and \eqref{ykp2} we can get that
$$
\begin{array}{l}
 \cJ_{\sigma\cN_{\cK}}\left( \big(2\cJ_{\sigma\partial\theta^*}-\cI-\sigma\nabla  f\circ  \cJ_{\sigma\partial\theta^*}\big) (u^k)\right)
\\[2mm]
=
\Pi_{\cK}\left(2x^{k+1}-u^k-\sigma(\cQ x^{k+1}+c )
\right)
\\[2mm]
=2x^{k+1}-u^k-\sigma(\cQ x^{k+1}+c)
+\sigma\cA^*y^{k+2}.
\end{array}
$$
From \eqref{sequk} and the fact that $w^{k+1}=x^{k}$ one has that
$$
\begin{array}{ll}
u^{k+1}
&
=x^{k+1}+\sigma(\rho\cA^*y^{k+2}-(1-\rho)z^{k+1} -\rho\cQ x^{k+1}-\rho c)
\\[2mm]
&
=u^k-\rho x^{k+1}+(1+\rho)x^{k+1}-u^k+\sigma\rho\cA^*y^{k+2}
\\[1mm]
&\quad\quad
-\sigma(1-\rho)z^{k+1}
-\sigma\rho(\cQ x^{k+1}+ c)
\\[2mm]
&
=u^k-\rho x^{k+1}
+\rho(
2x^{k+1}
- u^k
-\sigma (\cQ x^{k+1}+ c)+\sigma \cA^*y^{k+2})
\\[1mm]
&\quad\quad
+(1-\rho)(x^{k+1}
-u^k
-\sigma z^{k+1}).
\end{array}
$$
Note that \eqref{sequk} tells that $u^{k}=x^{k+1}-\sigma z^{k+1}$.
Therefore, we can readily get
$$
\begin{array}{ll}
u^{k+1}
&=u^k-\rho x^{k+1}
+\rho(
2x^{k+1}
- u^k
-\sigma (\cQ x^{k+1}+ c)
+\sigma \cA^*y^{k+2})
\\[2mm]
&=(1-\rho)u^{k}+\rho
\left(u^k-
x^{k+1}
+
2x^{k+1}
- u^k
-\sigma (\cQ x^{k+1}+ c)
+\sigma \cA^*y^{k+2}\right)\\[2mm]
&=(1-\rho)u^{k}+\rho\Gamma(u^k),
\end{array}
$$
which completes the proof.
\qed
\end{proof}

According to Proposition \ref{prop2},  Algorithm \ref{algo2} can also be viewed as a realization of the 3-operator splitting scheme proposed in \cite{Three-Operator} applied to the following problem
\[
\label{3blkdual}
\min_x~~\left\{\theta^*(x)+\delta_\cK(x)+\frac{1}{2} \langle x,\cQ x\rangle+\langle c,x\rangle\right\}.
\]
Therefore, by using Proposition \ref{prop2}, a part of the convergence properties of Algorithm \ref{algo2} can be deduced directly from \cite[Theorem 1.1]{Three-Operator}.  We summarize it as follows.
\begin{prop}
\label{thm:conv1}
Suppose that the solution set to problem \eqref{3blkdual} is nonempty and Assumption \ref{ass2} holds.
Let the infinite sequences
$\{w^k\}$, $\{y^k\}$, $\{z^k\}$ and $\{x^k\}$ be generated by Algorithm \ref{algo2} with
$\sigma\in \left(0,\frac{2}{\lambda_{\max}(\cQ)}\right)$
and
$\rho\in\left(0,\frac{4-\sigma\lambda_{\max}(\cQ)}{2}\right)$
.
Then, $\{x^k\}$  converges to a solution to problem \eqref{3blkdual}. Moreover, the sequence  $\{u^k\}$ defined by \eqref{sequk} converges to a unique point, say $u^{\infty}$, such that $0\in\Gamma(u^{\infty})$.
\end{prop}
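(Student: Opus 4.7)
The plan is to recognize Algorithm \ref{algo2} as a Krasnosel'skii-Mann iteration driven by the Davis-Yin three-operator splitting mapping $\Gamma$ defined in \eqref{mainoper}, and then to invoke \cite[Theorem 1.1]{Three-Operator} to harvest the convergence.

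First, I would rewrite problem \eqref{3blkdual} as the monotone inclusion
\[
0\in\partial\theta^*(x)+\cN_\cK(x)+\nabla q(x).
\]
Under Assumption \ref{ass2}, surjectivity of $\cA$ together with the relative-interior condition guarantees, via \cite[Theorem 23.8]{rocbook} (exactly as used in Section \ref{sec_Three-Operator}), that the solution set of this inclusion coincides with the solution set of \eqref{3blkdual}, and in particular is nonempty. The three ingredients of the splitting are the maximal monotone operators $\partial\theta^*$ and $\cN_\cK$, together with the single-valued operator $\nabla q$, matching the building blocks of $\Gamma$ in \eqref{mainoper}.

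Next I would verify the cocoercivity hypothesis required by \cite[Theorem 1.1]{Three-Operator}. Since $\cQ$ is self-adjoint positive semidefinite, the gradient $\nabla q(x)=\cQ x+c$ is $\tfrac{1}{\lambda_{\max}(\cQ)}$-cocoercive, as already recorded in Section \ref{prel}. Setting $\beta:=\tfrac{1}{\lambda_{\max}(\cQ)}$, the Davis-Yin theorem requires $\sigma\in(0,2\beta)$ and the relaxation parameter to lie in an interval of the form $\bigl(0,2-\tfrac{\sigma}{2\beta}\bigr)$; these two windows coincide precisely with the ranges $\sigma\in\bigl(0,2/\lambda_{\max}(\cQ)\bigr)$ and $\rho\in\bigl(0,(4-\sigma\lambda_{\max}(\cQ))/2\bigr)$ stated in the proposition.

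With these ingredients in place, Proposition \ref{prop2} shows that the auxiliary sequence $\{u^k\}$ defined in \eqref{sequk} obeys the Krasnosel'skii-Mann recursion $u^{k+1}=(1-\rho)u^k+\rho\Gamma(u^k)$. Applying \cite[Theorem 1.1]{Three-Operator} then delivers convergence of $\{u^k\}$ to a (unique, along the iterate sequence) fixed point $u^{\infty}$ of $\Gamma$, together with convergence of $\cJ_{\sigma\partial\theta^*}(u^k)$ to a solution of the three-operator inclusion, hence of \eqref{3blkdual}. Since identity \eqref{pf21} established in the proof of Proposition \ref{prop2} gives $x^{k+1}=\cJ_{\sigma\partial\theta^*}(u^k)$, the nonexpansiveness of the resolvent transfers the limit from $u^k$ to $x^{k+1}$, yielding $x^k\to\cJ_{\sigma\partial\theta^*}(u^{\infty})$, a solution of \eqref{3blkdual}. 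The main technical obstacle I anticipate is purely bookkeeping: matching, to the letter, the parameter window in \cite[Theorem 1.1]{Three-Operator} with the cocoercivity constant $\beta=1/\lambda_{\max}(\cQ)$, since even a harmless rescaling of the resolvent step would shift the endpoints of the admissible intervals for $\sigma$ and $\rho$.
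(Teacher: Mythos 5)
Your proposal is correct and follows exactly the route the paper takes: Proposition \ref{prop2} identifies $\{u^k\}$ as a Krasnosel'skii--Mann iteration of the Davis--Yin operator $\Gamma$, the $\tfrac{1}{\lambda_{\max}(\cQ)}$-cocoercivity of $\nabla q$ supplies the hypothesis of \cite[Theorem 1.1]{Three-Operator}, and the parameter windows for $\sigma$ and $\rho$ match that theorem verbatim. The paper states this deduction without spelling out the details; your write-up merely fills them in (including the useful observation that $x^{k+1}=\cJ_{\sigma\partial\theta^*}(u^k)$ from \eqref{pf21} carries the convergence of $\{u^k\}$ over to $\{x^k\}$), so there is no substantive difference in approach.
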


Since that Algorithm \ref{algo2} is intentionally designed for problem \eqref{cqpd}, Proposition \ref{thm:conv1} is still not enough for this algorithm.
Therefore, we need to further analyze its convergence properties.
The following theorem fulfils this objective.
\begin{thm}
Suppose that the solution set to problem \eqref{3blkdual} is nonempty and Assumption \ref{ass2} holds.
Let
$\sigma\in \left(0,\frac{2}{\lambda_{\max}(\cQ)}\right)$
and
$\rho\in\left(0,\frac{4-\sigma\lambda_{\max}(\cQ)}{2}\right)$.
Then, the infinite sequences $\{w^k\}$, $\{y^k\}$, $\{z^k\}$ and $\{x^k\}$ can be generated by Algorithm \ref{algo2}, and the sequence $\{(w^k,y^k,z^k)\}$ converges to a solution to problem \eqref{cqpd} while the sequence $\{x^k\}$ converges to a solution to problem \eqref{cqp}.
\end{thm}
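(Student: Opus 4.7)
The plan is to bootstrap the convergence claim from Proposition \ref{thm:conv1} (which already yields convergence of $\{x^k\}$ and $\{u^k\}$) by explicitly recovering the limits of $\{z^k\}$, $\{w^k\}$ and $\{y^k\}$ from the algorithmic identities, and then to verify the KKT system \eqref{KKT2} at the limiting point. The well-definedness of the sequences is immediate: $\cA\cA^*$ is nonsingular under Assumption \ref{ass2}, so Step $2$ admits a unique solution given in closed form, and Step $3$ is the proximal operator of a closed proper convex function evaluated at a point, hence also well-defined.

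First, I would invoke Proposition \ref{thm:conv1} to obtain a point $x^\infty\in\cX$ that solves \eqref{3blkdual} with $x^k\to x^\infty$, and a point $u^\infty\in\cX$ with $u^k\to u^\infty$. Since $w^{k+1}=x^k$ for every $k$, I immediately get $w^k\to x^\infty=:w^\infty$. Next, from the identity $u^k=x^{k+1}-\sigma z^{k+1}$ in \eqref{sequk}, I can solve for $z^{k+1}$ and conclude $z^k\to z^\infty:=(x^\infty-u^\infty)/\sigma$. The sequence $\{y^k\}$ is handled by the explicit formula
$$
y^{k+1}=-(\cA\cA^*)^{-1}\!\left(\tfrac{1}{\sigma}(\cA x^k-b)+\cA(z^k-\cQ x^k - c)\right),
$$
derived from the optimality condition of Step $2$ (using $w^{k+1}=x^k$); continuity in $(x^k,z^k)$ then yields $y^k\to y^\infty$ for a well-defined limit $y^\infty$.

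It remains to verify the four KKT relations in \eqref{KKT2} at $(x^\infty,w^\infty,y^\infty,z^\infty)$. The equation $\cQ w^\infty=\cQ x^\infty$ is trivial since $w^\infty=x^\infty$. Because $x^\infty$ solves \eqref{3blkdual} and $\delta_\cK$ appears in its objective, we have $x^\infty\in\cK$, i.e., $\cA x^\infty=b$. The multiplier equation $\cA^*y^\infty+z^\infty-\cQ w^\infty-c=0$ follows by passing to the limit in Step $4$: since $\{x^k\}$ converges, $x^{k+1}-x^k\to 0$, and dividing the update by $\sigma$ and taking $k\to\infty$ collapses the right-hand side to $\rho(\cA^*y^\infty+z^\infty-\cQ x^\infty-c)=0$, whence the equality (as $\rho\neq 0$). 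Finally, the inclusion $x^\infty\in\partial\theta(-z^\infty)$, equivalent to $0\in x^\infty-\partial\theta(-z^\infty)$, is obtained by passing to the limit in the optimality condition \eqref{optcond1} for Step $3$, using closedness of the graph of the subdifferential $\partial\theta$ together with the convergences $x^{k+1}\to x^\infty$ and $z^{k+1}\to z^\infty$.

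Having established that $(x^\infty,w^\infty,y^\infty,z^\infty)$ solves \eqref{KKT2}, Assumption \ref{ass2} and the arguments summarized in the remark after Assumption \ref{assum1} (extended to this setting via \cite[Corollaries 28.2.2, 28.3.1, 30.5.1]{rocbook}) yield that $x^\infty$ is optimal for \eqref{cqp} and that $(w^\infty,y^\infty,z^\infty)$ is optimal for \eqref{cqpd}, completing the proof. The main technical point to be careful about is the passage to the limit in the subdifferential inclusion for $\theta$; the rest is bookkeeping on linear relations and continuous operations. All other ingredients—convergence of $\{x^k\}$ and $\{u^k\}$, well-definedness, and the equivalence of the KKT system with optimality—are either provided by Proposition \ref{thm:conv1} or are immediate consequences of Assumption \ref{ass2}.
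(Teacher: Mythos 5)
Your proposal is correct and follows essentially the same route as the paper: invoke Proposition \ref{thm:conv1} for the convergence of $\{x^k\}$ and $\{u^k\}$, recover the limits of $\{w^k\}$, $\{z^k\}$ and $\{y^k\}$ from $w^{k+1}=x^k$, the identity \eqref{sequk} and the closed-form solution of Step $2$, and then pass to the limit in Step $4$ and in \eqref{optcond1} to verify the KKT system \eqref{KKT2}. The only cosmetic difference is that you obtain $\cA x^\infty=b$ from $x^\infty$ being a (finite-valued) solution of \eqref{3blkdual}, whereas the paper takes limits in the optimality condition \eqref{optt}; both are valid.
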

\begin{proof}
According to Proposition \ref{thm:conv1} we know that both sequences $\{x^k\}$ and $\{u^k\}$ are convergent, especially that
$\{x^k\}$ converges to a solution of problem \eqref{cqp}.
Define  $x^{\infty}:=\lim_{k\to\infty}x^k$ and
$u^{\infty}:=\lim_{k\to\infty}u^k$.
Then, by \eqref{ykp2} we know that the sequence $\{y^k\}$ is convergent. Moreover, by \eqref{sequk} we know that $\{z^k\}$ is also convergent.
We define $y^{\infty}:=\lim_{k\to\infty}y^k$
and $z^{\infty}:=\lim_{k\to\infty}z^k$.
Note that
$$
\lim_{k\to\infty} (\rho\cA^*y^{k+1}-(1-\rho)z^k+z^{k+1}-\rho\cQ w^{k+1}-\rho c)
=0,
$$
which implies that  $\cA^*y^{\infty}+z^{\infty}-\cQ x^{\infty}-c=0$.
Then, by taking limits on both sides of \eqref{optt}, one has that $\cA x^{\infty}-b=0$.
Also, one can take limits in \eqref{optcond1} and obtains that
$0\in-\partial \theta(-z^{\infty})+ x^{\infty}$.
Therefore, by denoting $w^{\infty}=x^{\infty}$ we can conclude that $(w^\infty,y^{\infty},z^{\infty},x^{\infty})$ is a solution to the KKT system \eqref{KKT2}, so that
$\{(w^k,y^k,z^k)\}$ converges to a solution to problem \eqref{cqpd}.
 This completes the proof.
\qed
\end{proof}

\section{Conclusions}
\label{conclusion}
In this paper, we have shown that the modified $3$-block ADMM in Chang et al. \cite{modified admm} is an instance of the $3$-operator splitting scheme in \cite{Three-Operator}. Based on this observation, we considered a generalized modified $3$-block ADMM applied to the more general convex composite quadratic programming model, and derived its convergence via a very concise approach.
The obtained results paved the way for further study of the proposed generalized modified ADMM for convex composite quadratic programming such as the iteration complexity and the local or global convergence rate, which we leave as our future work.

\begin{acknowledgements}
The authors would like to thank Prof. Defeng Sun at The Hong Kong Polytechnic University for insightful comments and suggestions on this manuscript.
The research of X. Chang was supported by the Fundamental Research Funds for Central Universities and the Innovation Fund of Xidian University.
The research of L. Chen was supported by the Fundamental Research Funds for Central Universities.
\end{acknowledgements}

\small
\def\baselinestretch{1.3}

\end{document}